\documentclass[11pt]{article}
\usepackage[utf8]{inputenc}
\usepackage[T1]{fontenc}
\usepackage{comment}
\usepackage{authblk}
\usepackage{relsize}
\usepackage{changepage}
\usepackage{mathtools}
\usepackage[english]{babel}
\usepackage{amsmath}
\usepackage{amsthm}
\usepackage{amsfonts}
\usepackage{amssymb}
\usepackage{graphicx}
\usepackage[usenames,dvipsnames]{color}
\theoremstyle{plain}
\newtheorem{theorem}{Theorem}[section]
\newtheorem{corollary}[theorem]{Corollary}

\newtheorem{fact}[theorem]{Fact}
\newtheorem{question}[theorem]{Question}

\newtheorem{claim}[theorem]{Claim}
\newtheorem{observation}[theorem]{Observation}

\makeatletter
\newcommand{\vast}{\bBigg@{4}}
\newcommand{\Vast}{\bBigg@{5}}
\makeatother
\definecolor{bulgarianrose}{rgb}{0.28, 0.02, 0.03}
\definecolor{gray}{rgb}{0.5, 0.5, 0.5}

\theoremstyle{definition}

\theoremstyle{remark}
\newtheorem*{remark}{Remark}
\newtheorem*{fact*}{Fact}
\newtheorem*{question*}{Question}
\usepackage[left=2cm,right=2cm,top=2cm,bottom=2cm]{geometry}
\usepackage{amssymb}
\usepackage{hyperref}
\makeatletter
\def\namedlabel#1#2{\begingroup
    #2%
    \def\@currentlabel{#2}%
    \phantomsection\label{#1}\endgroup
}
\usepackage[normalem]{ulem}
\usepackage{dsfont}
\showboxdepth=5
\showboxbreadth=5
\usepackage{accents}
\usepackage{verbatim}
\usepackage{ulem}
\usepackage{pgf,tikz,pgfplots}
\usepackage[all]{xy} 

\usepackage{stackengine}
\stackMath
\newcommand\tsup[2][2]{%
 \def\useanchorwidth{T}%
  \ifnum#1>1%
    \stackon[-.5pt]{\tsup[\numexpr#1-1\relax]{#2}}{\scriptscriptstyle\sim}%
  \else%
    \stackon[.5pt]{#2}{\scriptscriptstyle\sim}%
  \fi%
}

\newcommand{\towr}{\text{towr}}

\usepackage{subfig}

\usepackage{enumerate}

\title{\scshape
  A note on the Erd\H{o}s-Szekeres theorem in two dimensions}

\author{Lyuben Lichev}
\affil{Ecole Normale Supérieure de Lyon, Lyon, France}

\begin{document}

\maketitle
 
\begin{abstract}
In \cite{BM1, Kal} respectively Burkill and Mirsky, and Kalmanson, prove independently that, for every $r\ge 2, n\ge 1$, there is a sequence of $r^{2^n}$ vectors in $\mathbb R^n$, which does not contain a subsequence of $r+1$ vectors $v^1, v^2,\dots,v^{r+1}$ such that, for every $i$ between 1 and $n$, $(v^{j}_i)_{1\le j\le r+1}$ forms a monotone sequence. Moreover, $r^{2^n}$ is the largest integer with this property. In this short note, for two vectors $u = (u_1, u_2,\dots, u_n)$ and $v = (v_1, v_2, \dots, v_n)$ in $\mathbb R^n$, we say that $u\le v$ if, for every $i$ between 1 and $n$, $u_i\le v_i$. Just like Burkill and Mirsky, and Kalmanson, for every $k, \ell\ge 1, d\ge 2$ we find the maximal $N_1, N_2$ (which turn out to be equal) such that there are numerical two-dimensional arrays of size $(k+\ell-1)\times N_1$ and $(k+\ell)\times N_2$, which neither contain a subarray of size $k\times d$, whose columns form an increasing sequence of $d$ vectors in $\mathbb R^k$, nor contain a subarray of size $\ell\times d$, whose columns form a decreasing sequence of $d$ vectors in $\mathbb R^{\ell}$. In a consequent discussion, we consider a generalisation of this setting and make a connection with a famous problem in coding theory.
\end{abstract}

\hspace{1em}Keywords: Erd\H{o}s-Szekeres, two-dimensional array, monotone sequence of vectors

\hspace{1em}MSC Class: 05D10

\section{Introduction}
For every $n\ge 1$, we denote by $[n]$ the set of integers between 1 and $n$. For a sequence $(a_i)_{i=1}^{n}$ and any $k\in [n]$, a \emph{subsequence} of $(a_i)_{i=1}^{n}$ of length $k$ is any sequence $a_{i_1}, a_{i_2}, \dots, a_{i_k}$ with $1\le i_1 < i_2 < \dots < i_k\le n$. In this note, an \emph{array $A$ of size $m\times n$} is a rectangular matrix of size $m\times n$, and a \emph{subarray} of $A$ is obtained by intersecting a subset of the rows of $A$ with a subset of the columns of $A$.\par

The following theorem due to Erd\H{o}s and Szekeres is one of the cornerstones in the field of Ramsey theory. 

\begin{theorem}[Erd\H{o}s-Szekeres theorem, \cite{ES}]\label{ES thm}
For all integers $k, \ell\ge 1$, any sequence of $k\ell + 1$ real numbers contains either an increasing subsequence of length $k+1$ or a decreasing subsequence of length $\ell+1$.\footnote{In this paper, an increasing (respectively decreasing) sequence is not necessarily strictly increasing (respectively strictly decreasing).}\qed
\end{theorem}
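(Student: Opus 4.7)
The plan is to prove the theorem by the classical pigeonhole argument. Given a sequence $a_1, a_2, \dots, a_{k\ell+1}$ of real numbers, I would, for each index $i\in [k\ell+1]$, define $f(i)$ to be the length of the longest (non-strictly) increasing subsequence of $a_1,\dots,a_i$ ending at $a_i$, and $g(i)$ to be the length of the longest (non-strictly) decreasing subsequence of $a_1,\dots,a_i$ ending at $a_i$. By construction, $f(i),g(i)\ge 1$ for every $i$.

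The key step is to show that the assignment $i \mapsto (f(i), g(i))$ is injective. Given $1\le i<j\le k\ell+1$, either $a_i\le a_j$, in which case an optimal increasing subsequence ending at $a_i$ can be extended by $a_j$ and hence $f(j)\ge f(i)+1$, or $a_i\ge a_j$, in which case symmetrically $g(j)\ge g(i)+1$. In at least one case a coordinate strictly increases, so $(f(j),g(j))\neq (f(i),g(i))$.

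I would then conclude by pigeonhole: if every index $i$ satisfied $f(i)\le k$ and $g(i)\le \ell$, then the $k\ell+1$ pairwise distinct pairs $(f(i),g(i))$ would all lie in the set $[k]\times[\ell]$ of cardinality $k\ell$, a contradiction. Hence some index $i$ has either $f(i)\ge k+1$, giving an increasing subsequence of length $k+1$, or $g(i)\ge \ell+1$, giving a decreasing subsequence of length $\ell+1$.

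The argument is entirely standard, so there is no real obstacle; the only point that requires a moment's thought is the interaction with the non-strict monotonicity convention adopted in the paper. In the equality case $a_i=a_j$ \emph{both} inequalities above hold simultaneously, so both $f$ and $g$ strictly increase from $i$ to $j$; rather than causing any issue, the convention actually makes the injectivity step cleaner than it would be in the strict setting.
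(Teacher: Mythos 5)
Your proof is correct. Note that the paper does not prove this statement at all --- it is quoted from \cite{ES} as a known black box (hence the \qed attached directly to the theorem statement) --- so there is no in-paper argument to compare against. What you give is the standard pigeonhole proof via the injective map $i\mapsto(f(i),g(i))$, and you correctly handle the one point where the paper's non-strict monotonicity convention matters: for arbitrary (not necessarily distinct) reals the dichotomy $a_i\le a_j$ or $a_i\ge a_j$ is exhaustive, so at least one coordinate strictly increases and injectivity follows, with the equality case only helping.
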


Since the appearance of the theorem in 1935, a vast number of generalisations have been brought forward. A few of them can be found in \cite{BM2, BM1, Kal}. In each of the three papers, a major subject was the topic of finding large subarrays of numerical arrays, in which the entries of every row or every column, or both, form a monotone sequence. To be more precise, we state a version of Theorem 1 from \cite{Kal}, which also appears as Theorem 2.4 in \cite{BM1}:

\begin{theorem}[\cite{BM1, Kal}]\label{Kal thm}
Let $S$ be a sequence of vectors in $\mathbb R^n, (n \ge 1)$. If $S$ has length at least $r^{2^n} + 1$, then it contains a subsequence of $r+1$ vectors $v^1, v^2,\dots,v^{r+1}$ such that, for every $i$ between 1 and $n$, $(v^{j}_i)_{1\le j\le r+1}$ forms a monotone sequence. Moreover, $r^{2^n} + 1$ cannot be replaced by $r^{2^n}$.
\end{theorem}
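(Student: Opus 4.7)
The statement consists of an upper bound and a tightness claim, which I address separately.

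\medskip

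\emph{Upper bound.} Suppose $S=(v^1,\dots,v^N)$ avoids any subsequence of length $r+1$ whose coordinates are all simultaneously monotone. For each $k\in[N]$ and each sign pattern $\epsilon=(\epsilon_1,\dots,\epsilon_n)\in\{+,-\}^n$, set $\lambda^\epsilon(k)$ to be the length of the longest subsequence of $v^1,\dots,v^k$ ending at $v^k$ whose $i$-th coordinates are non-decreasing if $\epsilon_i=+$ and non-increasing if $\epsilon_i=-$, for every $i\in[n]$. The hypothesis forces $\lambda^\epsilon(k)\le r$, so the tuple $\Lambda(k)=(\lambda^\epsilon(k))_\epsilon$ lies in $[r]^{2^n}$. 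I would then check that $k\mapsto\Lambda(k)$ is injective: for $k<k'$, pick $\epsilon$ with $\epsilon_i=+$ iff $v^k_i\le v^{k'}_i$, so that any $\epsilon$-monotone subsequence ending at $v^k$ can be extended by $v^{k'}$, giving $\lambda^\epsilon(k')\ge\lambda^\epsilon(k)+1$. Hence $N\le r^{2^n}$, which is the first half of the statement.

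\medskip

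\emph{Tightness.} I would argue by induction on $n$. For $n=1$, the standard construction of $r$ consecutive decreasing blocks of length $r$ (i.e.\ $r,r-1,\dots,1,\;2r,2r-1,\dots,r+1,\;\dots$) gives $r^2$ reals with no monotone $(r+1)$-subsequence. For $n\ge 2$, given a bad sequence in $\mathbb{R}^{n-1}$ of length $M=r^{2^{n-1}}$, the task is to produce one of length $M^2=r^{2^n}$ in $\mathbb{R}^n$. A clean route is to index the construction directly by $[r]^{2^n}$: for each $\lambda=(\lambda^\epsilon)_\epsilon$ define $v(\lambda)\in\mathbb{R}^n$ whose $i$-th coordinate encodes the entries of $\lambda$ with weights that depend on $\epsilon_i$, and list the vectors in a suitable linear order (such as lexicographic in $\lambda$). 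The design has to ensure that the tuple $\Lambda(\cdot)$ produced by the upper-bound analysis coincides with the indexing tuple, so that all $r^{2^n}$ labels are realised and no monotone $(r+1)$-subsequence can exist.

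\medskip

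\emph{Main obstacle.} The upper bound is a routine $2^n$-parameter extension of the standard labelling proof of Erd\H{o}s--Szekeres and should cause little trouble. The delicate step is the tightness construction: because the length squares at each inductive step while the dimension only grows by one, the new coordinate has to carry an entire $(n{-}1)$-dimensional bad sequence worth of information, and both the definition of $v(\lambda)$ and the order on $[r]^{2^n}$ must be tuned so that every label tuple is simultaneously attained. Getting this bookkeeping right --- or equivalently, finding the correct inductive gluing of two copies of the $(n{-}1)$-dimensional extremal example --- is where the technical work sits.
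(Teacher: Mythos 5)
Note first that the paper does not prove Theorem~\ref{Kal thm} at all: it is quoted from \cite{BM1, Kal} as background, so your attempt can only be judged on its own merits. The upper-bound half is correct and complete. Assigning to each index $k$ the tuple $\Lambda(k)=(\lambda^\epsilon(k))_{\epsilon\in\{+,-\}^n}\in[r]^{2^n}$ and observing that for $k<k'$ the sign pattern $\epsilon$ defined by $\epsilon_i=+$ iff $v^k_i\le v^{k'}_i$ allows $v^{k'}$ to extend any $\epsilon$-monotone chain ending at $v^k$, hence $\lambda^\epsilon(k')>\lambda^\epsilon(k)$, is exactly the Seidenberg-style labelling argument, and injectivity of $\Lambda$ gives $N\le r^{2^n}$.

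The tightness half, however, is a plan rather than a proof: you never define $v(\lambda)$ or the linear order, and you yourself flag this as the unresolved point. Worse, the most literal reading of your plan --- arrange matters so that a step $v(\lambda)\to v(\mu)$ is $\epsilon$-monotone precisely when $\lambda^\epsilon<\mu^\epsilon$ --- cannot work as stated, because a single step realizes exactly one sign pattern (generically), while for $\lambda=(1,\dots,1)$ and $\mu=(2,\dots,2)$ every component increases. The standard way to close the gap is a positional (digit) construction rather than an equivalence of labels: fix a bijection $j\mapsto\epsilon(j)$ from $\{0,\dots,2^n-1\}$ onto $\{+,-\}^n$, write each $m\in\{0,\dots,r^{2^n}-1\}$ in base $r$ as $m=\sum_j d_j(m)r^j$, and set $v^m_i=\sum_j \epsilon(j)_i\, d_j(m)\, r^j$ (reading $\epsilon(j)_i$ as $\pm 1$), listing the $v^m$ in increasing order of $m$. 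For $m<m'$ the highest differing digit position $j^*$ dominates every coordinate difference, so no coordinate difference vanishes and the sign pattern of the step $v^m\to v^{m'}$ is exactly $\epsilon(j^*)$, with $d_{j^*}(m)<d_{j^*}(m')$. Consequently, in any coordinatewise monotone subsequence all consecutive steps share one sign pattern, hence one position $j^*$, and the digit $d_{j^*}$ strictly increases along it; since digits lie in $\{0,\dots,r-1\}$ the subsequence has length at most $r$. You should either write this out or make your inductive blow-up of the $(n-1)$-dimensional example explicit; as it stands the second assertion of the theorem is unproved.
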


In another line of research, the papers \cite{BST, FG} consider another notion of monotonicity. For two vectors $u = (u_1, u_2,\dots, u_n)$ and $v = (v_1, v_2, \dots, v_n)$ in $\mathbb R^n$, we say that $u\le v$ if, for every $i\in [n]$, $u_i\le v_i$. A sequence of vectors $v^1, v^2,\dots, v^k\in \mathbb R^d$ is said to be \emph{monotone} if $v^1\le v^2\le \dots\le v^k$ (in this case, the sequence is \emph{increasing}) or $v^k\le v^{k-1}\le \dots \le v^1$ (the sequence is then \emph{decreasing}). An array is called \emph{monotone} if both the set of its rows and the set of its columns form a monotone sequence of vectors. Perhaps the first to introduce this notion were Fishburn and Graham in \cite{FG}. The papers \cite{BST, FG} treat (a generalisation in dimension $d\ge 2$ of) the problem of finding the minimal $N$, for which every numerical array of size $N\times N$ contains a monotone subarray of size $n\times n$ (they denote this minimal $N$ by $M_2(n)$).\par 

Our setting interpolates between the weaker notion from \cite{BM2, BM1, Kal} and the stronger one from \cite{BST, FG}. In particular, we will be interested in the partial order used in \cite{BST, FG}, but only on the set of columns of an array and not on the set of rows.

\section{Our results}

We say that a numerical array has \emph{property $(k, \ell, d)$} if it contains either a subarray of size $k\times d$, whose columns form an increasing sequence of vectors in $\mathbb R^k$, or a subarray of size $\ell\times d$, whose columns form a decreasing sequence of vectors in $\mathbb R^{\ell}$.

\begin{theorem}\label{thm 1}
For every $k, \ell\ge 1, d\ge 2$, every array of size $(k+\ell-1)\times ((d-1)^{\binom{k+\ell}{k}}+1)$ has property $(k, \ell, d)$.
\end{theorem}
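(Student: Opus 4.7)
The plan is to attach to each column a tuple of labels that records, for every choice of a ``potential witness'' of failure, how close we currently are to producing that witness. If the array has no property $(k,\ell,d)$, each coordinate of the tuple lies in $[d-1]$, and I will show two distinct columns cannot share the same tuple. Since there will be exactly $\binom{k+\ell-1}{k}+\binom{k+\ell-1}{\ell}=\binom{k+\ell}{k}$ coordinates (by Pascal's identity), this yields the bound $(d-1)^{\binom{k+\ell}{k}}$ on the number of columns, one below the hypothesis.

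Concretely, write $n=k+\ell-1$, and let $c_1,\dots,c_N$ be the columns, viewed as vectors in $\mathbb R^n$. For every subset $R\subseteq[n]$ with $|R|=k$, define $f_R(c_j)$ to be the length of the longest increasing sequence (in the sense of the coordinate-wise partial order) among the subsequences of $c_1,\dots,c_j$ ending at $c_j$ and restricted to the coordinates in $R$. Analogously, for every $S\subseteq [n]$ with $|S|=\ell$, let $g_S(c_j)$ be the length of the longest decreasing sequence ending at $c_j$ restricted to $S$. Assuming the array fails property $(k,\ell,d)$, all values $f_R(c_j)$ and $g_S(c_j)$ lie in $[d-1]$. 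The label assigned to column $c_j$ is the tuple of all these values; there are $\binom{n}{k}+\binom{n}{\ell}=\binom{k+\ell}{k}$ coordinates and hence at most $(d-1)^{\binom{k+\ell}{k}}$ possible labels.

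The core step is showing labels are injective. Suppose columns $c_i,c_j$ with $i<j$ carry the same label, and let
\[
A=\{r\in[n]\colon c_i[r]>c_j[r]\},\qquad B=\{r\in[n]\colon c_i[r]<c_j[r]\}.
\]
For any $k$-subset $R\subseteq [n]$, if $R\cap A=\emptyset$ then $c_i|_R\le c_j|_R$, so appending $c_j$ to the witness of $f_R(c_i)$ yields $f_R(c_j)\ge f_R(c_i)+1$, contradicting equality of labels. Hence every $k$-subset of $[n]$ meets $A$, which forces $|A|\ge n-k+1=\ell$. Symmetrically $|B|\ge k$. But $A$ and $B$ are disjoint subsets of $[n]$, giving $k+\ell\le|A|+|B|\le n=k+\ell-1$, a contradiction.

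The main (and essentially only) obstacle is choosing the label set so that its size works out to exactly $\binom{k+\ell}{k}$ and the disjointness argument for $A,B$ closes. Once one notices that using $k$-subsets for the increasing bookkeeping and $\ell$-subsets for the decreasing bookkeeping matches the partition $n=(k-1)+\ell=(k)+(\ell-1)$ dictating the sizes of $A^c$ and $B^c$, everything fits together. Ties are handled by the strict direction in the definitions of $A$ and $B$: weak inequality on $R$ of the restrictions already suffices to extend an increasing chain, so equality of $f_R$ forces some strict reverse inequality inside $R$.
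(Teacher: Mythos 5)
Your proof is correct, and it takes a genuinely different route from the paper's. The paper argues by induction on $k+\ell$: it applies the one-dimensional Erd\H{o}s--Szekeres theorem to the first row of the $(k+\ell-1)\times\bigl((d-1)^{\binom{k+\ell}{k}}+1\bigr)$ array to extract either $(d-1)^{\binom{k+\ell-1}{k-1}}+1$ columns on which that row increases or $(d-1)^{\binom{k+\ell-1}{k}}+1$ columns on which it decreases, deletes the first row, and invokes the induction hypothesis for the pair $(k-1,\ell)$ or $(k,\ell-1)$, with Pascal's identity making the column counts match. You instead run a one-shot Seidenberg-style pigeonhole argument: you label each column by the tuple of longest weakly increasing chain lengths over all $k$-subsets of rows together with longest weakly decreasing chain lengths over all $\ell$-subsets, note that failure of property $(k,\ell,d)$ confines every coordinate of the label to $[d-1]$, and prove injectivity of the labelling via the disjoint sets $A$ and $B$ and the count $|A|\ge\ell$, $|B|\ge k$, $|A|+|B|\le k+\ell-1$. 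Both arguments are complete, and the identity $\binom{k+\ell-1}{k}+\binom{k+\ell-1}{k-1}=\binom{k+\ell}{k}$ plays the same structural role in each; your version is self-contained (it reproves rather than invokes Erd\H{o}s--Szekeres, which is recovered as the case $k=\ell=1$) and makes the exponent appear directly as a count of row-subsets, whereas the paper's induction is shorter on the page. Your final contradiction also pinpoints exactly where having only $k+\ell-1$ rows is used: with $k+\ell$ or more rows the sets $A$ and $B$ could coexist, which is consistent with the paper's closing remark that its own method likewise breaks down once more rows are present.
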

\begin{proof}
Our main tool will be the Erd\H{o}s-Szekeres theorem. We prove the theorem by induction on $k+\ell$. The case $k+\ell=2$ is true, since this is the statement of the Erd\H{o}s-Szekeres theorem itself. Suppose that the statement is true for every pair of positive integers of sum less than some integer $t\ge 3$. Let $k, \ell$ be positive integers with $k+\ell = t$. Consider the first row of an array of size $(k+\ell-1)\times ((d-1)^{\binom{k+\ell}{k}}+1)$. Then, by the Erd\H{o}s-Szekeres theorem, either it contains an increasing subsequence of size $(d-1)^{\binom{k+\ell-1}{k-1}}+1$, or it contains a decreasing subsequence of size $(d-1)^{\binom{k+\ell-1}{k}}+1$. If $k=1$ and the first case takes place, or if $\ell=1$ and the second case takes place, we are done. Otherwise, in both cases, it is sufficient to consider the subarray of the remaining $t-1$ rows and the columns, whose first entries participate in the monotone subsequence given above, to conclude by the induction hypothesis.
\end{proof}

\begin{remark}
For any $k, \ell\ge 1, M\ge 2$, Theorem~\ref{thm 1} cannot hold for an array of size $(k+\ell-2)\times M$. Indeed, letting the first $k-1$ rows be equal to $1,2,\dots, M$, and the remaining $\ell-1$ rows be equal to $M, M-1,\dots, 1$, there is no subarray of the form described in Theorem~\ref{thm 1}.\qed
\end{remark}

\begin{remark}
In \cite{FG} Fishburn and Graham gave an upper bound on $M_2(n)$ of the form $\towr_5(O(n))$, where the function $\towr_k$ is defined recursively as $\towr_1(n) = n, \towr_k(n) = 2^{\towr_{k-1}(n)}$. This bound was improved significantly by Buci\'c, Sudakov and Tran in \cite{BST} to
\begin{equation*}
    M_2(n)\le (2n)^{2^{2n}}.
\end{equation*}

Theorem~\ref{thm 1} may be seen as an improvement of Theorem 2.3 from \cite{BST} in the special case when $k = \ell = n$. This slightly improves the bound on $M_2(n)$ given there: we obtain 
\begin{equation*}
   M_2(n)\le 2\binom{(2n-2)^2+1}{2n-1} (n-1)^{\binom{2n}{n}} + 1.
\end{equation*}
Indeed, consider a numerical array of size $((2n-2)^2+1)\times \left(2\dbinom{(2n-2)^2+1}{2n-1} (n-1)^{\binom{2n}{n}} + 1\right)$. By the Erd\H{o}s-Szekeres theorem every column of the array contains a monotone subsequence of length $2n-1$. Since any such subsequence may be increasing or decreasing, and the number of possible choices for the indices of the rows of its entries is $\binom{(2n-2)^2+1}{2n-1}$, one may deduce by the Pigeonhole principle that there is a subarray of size $(2n-1)\times \left((n-1)^{\binom{2n}{n}} + 1\right)$, whose rows form a monotone sequence of vectors of dimension $\mathbb R^{(n-1)^{\binom{2n}{n}} + 1}$. By Theorem~\ref{thm 1} we find a monotone subarray of size $n\times n$ in the latter.\qed
\end{remark}

What is most surprising about Theorem~\ref{thm 1} is that it is indeed tight for any choice of parameters $k,\ell\ge 1, d\ge 2$.

\begin{theorem}\label{thm 2}
For every $k, \ell\ge 1, d\ge 2$, there is an array of size $(k+\ell)\times (d-1)^{\binom{k+\ell}{k}}$, which does not have property $(k, \ell, d)$.
\end{theorem}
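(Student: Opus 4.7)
The proof goes by induction on $k + \ell$, paralleling Theorem~\ref{thm 1}. In the base case $k = \ell = 1$, take the classical Erd\H{o}s-Szekeres extremal sequence of length $(d-1)^2$ (for example, $d-1$ decreasing blocks of $d-1$ values with strictly increasing minima), and use it for both rows of a $2 \times (d-1)^2$ array; neither row contains a monotone subsequence of length $d$, so property $(1, 1, d)$ is avoided.

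For the inductive step with $k + \ell \ge 3$, let $M_1 = (d-1)^{\binom{k+\ell-1}{k-1}}$ and $M_2 = (d-1)^{\binom{k+\ell-1}{k}}$, so $M_1 M_2 = (d-1)^{\binom{k+\ell}{k}}$. By the induction hypothesis fix arrays $B$ of size $(k+\ell-1) \times M_1$ avoiding $(k-1, \ell, d)$ and $C$ of size $(k+\ell-1) \times M_2$ avoiding $(k, \ell-1, d)$; a mild generic perturbation ensures all entries in any row of $B$ or $C$ are distinct. Construct $A$ of size $(k+\ell) \times M_1 M_2$ by indexing its columns by pairs $(s, t) \in [M_2] \times [M_1]$ in lexicographic order, setting $A[1, (s, t)] = (M_2 - s) M_1 + t$ (so the first row forms $M_2$ strictly increasing blocks of length $M_1$, with smaller-$s$ blocks taking larger values), and $A[i, (s, t)] = L \cdot C[i-1, s] + B[i-1, t]$ for $i = 2, \ldots, k + \ell$, where $L$ is larger than every pairwise difference of entries in any row of $B$.

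Verify the avoidance of $(k, \ell, d)$ by case analysis on whether row $1$ belongs to a putative $k$-row increasing witness or to a putative $\ell$-row decreasing witness. If row $1$ is among the $k$ increasing rows, its strict decrease between blocks confines the $d$ columns to a single block, and the remaining $k - 1$ rows on those columns form a row-shifted copy of $B$; a $(k-1) \times d$ increasing subarray there would contradict $B$'s avoidance of $(k-1, \ell, d)$. Symmetrically, if row $1$ is among the $\ell$ decreasing rows, the $d$ columns must have pairwise distinct $s$-values, and the large-$L$ regime forces an $(\ell-1) \times d$ decreasing subarray in $C$, contradicting its avoidance of $(k, \ell-1, d)$. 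In the cases where row $1$ is not involved, group the $d$ chosen columns by their $s$-value and combine the resulting $B$- and $C$-constraints: within each $s$-group the situation reduces to $B$'s structure, and between groups to $C$'s, and the joint use of both avoidance properties (together with the distinct-entry assumption) forces a contradiction.

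The main technical obstacle lies in the latter case analysis when the $d$ columns distribute across $p \ge 2$ distinct $s$-groups, each of size strictly less than $d$; ruling out these mixed configurations requires the joint exploitation of $B$'s and $C$'s avoidance properties (notably, $C$'s implicit avoidance of $\ell \times d$ decreasing, which follows from avoiding $(\ell-1) \times d$ decreasing, and the analogous $k \times d$ increasing avoidance for $B$) together with the lexicographic column ordering and the large-$L$ coupling, rather than an application of the induction hypothesis to $B$ or $C$ alone.
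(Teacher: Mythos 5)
The gap you flag yourself in the last paragraph is fatal, not merely technical: in the mixed case the induction hypothesis is genuinely too weak, and the construction as stated can produce an array that \emph{does} have property $(k,\ell,d)$. Concretely, take $k=2$, $\ell=1$, $d=3$, so $M_1=4$, $M_2=2$. The array $B$ only needs to avoid $(1,1,3)$, i.e.\ no row may contain a monotone subsequence of length $3$; the choice $B=\left(\begin{smallmatrix}2&1&4&3\\2&1&4&3\end{smallmatrix}\right)$ satisfies this (and has distinct entries in each row), and $C$ may be $\left(\begin{smallmatrix}1&2\\1&2\end{smallmatrix}\right)$ since avoiding $(2,0,3)$ is vacuous for a $2$-column array. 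With $L=10$ your rows $2$ and $3$ both read $12,11,14,13,22,21,24,23$, and the columns $(1,1),(1,3),(2,1)$ restricted to rows $2$ and $3$ give the increasing vectors $(12,12)\le(14,14)\le(22,22)$: a $2\times 3$ increasing subarray, so the resulting array has property $(2,1,3)$. The root cause is that when the $d$ chosen columns split into $1<p<d$ blocks, you only extract a $k\times p$ increasing subarray of $C$ and $k\times m_q$ increasing subarrays of $B$ with all $p,m_q<d$, and neither contradicts the inductive avoidance properties; ruling this out would require $B$ and $C$ to satisfy much stronger joint conditions (e.g.\ here $B$ must avoid even $2\times 2$ increasing subarrays), which you neither formulate nor carry through the induction. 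A secondary issue: when $k=1$ or $\ell=1$ the inductive appeal to avoidance of $(0,\ell,d)$ or $(k,0,d)$ is ill-posed, since a $0\times d$ monotone subarray exists vacuously, so these boundary cases need separate treatment.

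For comparison, the paper does not induct at all: it builds the array in one shot. Each of the $k+\ell$ rows is a permutation of $[(d-1)^{\binom{k+\ell}{k}}]$ organised into nested blocks of sizes $(d-1)^i$, and the increasing/decreasing orientation of the level-$i$ sub-blocks in row $j$ is dictated by the $i$-th entry of the $j$-th coordinate read across all $\binom{k+\ell}{k}$ lattice paths from $(0,0)$ to $(k,\ell)$ with $k$ up-steps. Any $d$ columns must split at two distinct levels $i_1\ne i_2$, so a $k\times d$ increasing subarray would force two distinct paths to both have up-steps at the same $k$ positions, which is impossible since a path with exactly $k$ up-steps is determined by the positions of those steps. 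If you want to salvage an inductive product construction, you would need to identify and propagate the stronger properties sketched above; the paper's path-encoding sidesteps this entirely.
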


Note that Theorem~\ref{thm 2} is slightly stronger than a purely complementary statement of Theorem~\ref{thm 1} since we consider arrays of size $(k+\ell)\times (d-1)^{\binom{k+\ell}{k}}$ and not $(k+\ell-1)\times (d-1)^{\binom{k+\ell}{k}}$.\par 

Before constructing an array of size $(k+\ell)\times (d-1)^{\binom{k+\ell}{k}}$, which does not have property $(k, \ell, d)$, we make an elementary observation. A walk in $\mathbb Z^2$ is \emph{up-right} if it uses only transitions of the type $(x,y)\to (x+1,y)$ and $(x,y)\to (x, y+1)$.
\begin{observation}\label{trivial}
For every $k,\ell\in \mathbb N$, the number of up-right walks in $\mathbb Z^2$ from $(0,0)$ to $(k, \ell)$ is $\binom{k+\ell}{k}$.\qed
\end{observation}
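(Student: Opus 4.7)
The plan is to establish a bijection between up-right walks from $(0,0)$ to $(k,\ell)$ and $k$-subsets of $[k+\ell]$, whose cardinality is $\binom{k+\ell}{k}$ by definition of the binomial coefficient.

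First I would observe that every up-right walk from $(0,0)$ to $(k,\ell)$ uses exactly $k+\ell$ steps in total: since each step increases either the first or the second coordinate by $1$, and the total increase in the first coordinate is $k$ and in the second coordinate is $\ell$, there must be exactly $k$ steps of type $(x,y)\to(x+1,y)$ and exactly $\ell$ steps of type $(x,y)\to(x,y+1)$. Conversely, any ordering of these $k+\ell$ steps produces a valid up-right walk from $(0,0)$ to $(k,\ell)$, so such a walk is entirely determined by the choice of which of its $k+\ell$ steps are of the first type.

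The bijection then sends an up-right walk to the subset $S\subseteq [k+\ell]$ consisting of the indices of the rightward steps; this is a $k$-element subset, and the map is clearly injective and surjective. Since the number of $k$-subsets of $[k+\ell]$ equals $\binom{k+\ell}{k}$, the observation follows.

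There is no real obstacle here; this is a standard combinatorial identity used only to motivate the formula appearing in Theorems~\ref{thm 1} and~\ref{thm 2}. Since the statement is labelled as an observation and ends with \qed in the excerpt, the authors may well omit the verification altogether, in which case the brief bijective argument above is the natural justification.
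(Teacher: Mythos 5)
Your proof is correct: the paper indeed omits any argument for this observation (the \qed follows the statement directly), and your bijection between up-right walks and the $k$-subsets of $[k+\ell]$ recording the positions of the rightward steps is exactly the standard justification the author is implicitly invoking. Nothing further is needed.
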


For technical reasons, let us rotate the grid $\mathbb Z^2$ at $-45^{\circ}$ and consider walks from $(0,0)$ to $\left(\frac{k+\ell}{\sqrt{2}}, \frac{k-\ell}{\sqrt{2}}\right)$ (which is the image of $(k, \ell)$ under the rotation) instead. Each of these walks contains $k$ steps of the type $\nearrow$ and $\ell$ steps of the type $\searrow$. By Observation~\ref{trivial} there are $\binom{k+\ell}{k}$ such walks, which we list in an arbitrary order.\par 

Let $A$ be an array of size $(k+\ell)\times \binom{k+\ell}{k}$ with entries in the set $\{\nearrow, \searrow\}$, for which the $j-$th column $(A_{i,j})_{i=1}^{k+\ell}$ describes the $j-$th walk in the list above (one begins the walk with the arrow $A_{1,j}$). See Figure~\ref{fig 1}.

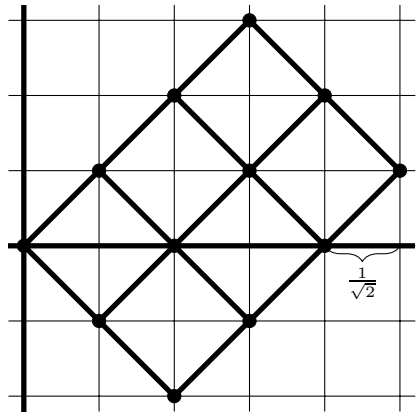
\begin{figure}[h]
\centering
\begin{tikzpicture}[line cap=round,line join=round,x=1cm,y=1cm]
\draw [xstep=1cm,ystep=1cm] (-4.2,-2.2) grid (1.2,3.2);
\clip(-4.2,-2.2) rectangle (1.2,3.2);
\draw [line width=2pt] (-4,-6.04) -- (-4,6.04);
\draw [line width=2pt,domain=-12.8:12.8] plot(\x,{(-0-0*\x)/2});
\draw [line width=2pt] (-4,0)-- (-3,1);
\draw [line width=2pt] (-3,1)-- (-2,2);
\draw [line width=2pt] (-2,2)-- (-1,3);
\draw [line width=2pt] (-1,3)-- (0,2);
\draw [line width=2pt] (0,2)-- (1,1);
\draw [line width=2pt] (-4,0)-- (-3,-1);
\draw [line width=2pt] (-3,-1)-- (-2,-2);
\draw [line width=2pt] (-2,-2)-- (-1,-1);
\draw [line width=2pt] (-1,-1)-- (0,0);
\draw [line width=2pt] (0,0)-- (1,1);
\draw [line width=2pt] (-2,2)-- (-1,1);
\draw [line width=2pt] (-1,1)-- (0,0);
\draw [line width=2pt] (0,2)-- (-1,1);
\draw [line width=2pt] (-3,1)-- (-2,0);
\draw [line width=2pt] (-2,0)-- (-1,-1);
\draw [line width=2pt] (-3,-1)-- (-2,0);
\draw [line width=2pt] (-2,0)-- (-1,1);

\draw [decorate,decoration={brace,amplitude=6pt},xshift=0pt,yshift=0pt]
(1,0) -- (0,0) node {};
\begin{scriptsize}
\draw [fill=black] (-4,0) circle (2.5pt);
\draw [fill=black] (1,1) circle (2.5pt);
\draw [fill=black] (-3,1) circle (2.5pt);
\draw [fill=black] (-2,2) circle (2.5pt);
\draw [fill=black] (-1,3) circle (2.5pt);
\draw [fill=black] (0,2) circle (2.5pt);
\draw [fill=black] (-3,-1) circle (2.5pt);
\draw [fill=black] (-2,-2) circle (2.5pt);
\draw [fill=black] (-1,-1) circle (2.5pt);
\draw [fill=black] (0,0) circle (2.5pt);
\draw [fill=black] (-1,1) circle (2.5pt);
\draw [fill=black] (-2,0) circle (2.5pt);
\draw [fill=black] (0.5,-0.5) node {{$\frac{1}{\sqrt{2}}$}};
\end{scriptsize}
\end{tikzpicture}
\captionsetup{singlelinecheck=off}
\caption[]{The figure depicts the example $k=3, \ell=2$. One possibility for the array $A$ is given by 
\begin{displaymath}
\begin{bmatrix}
\nearrow & \nearrow & \nearrow & \nearrow & \nearrow & \nearrow & \searrow & \searrow & \searrow & \searrow \\

\nearrow & \nearrow & \nearrow & \searrow & \searrow & \searrow & \nearrow & \nearrow & \nearrow & \searrow \\

\nearrow & \searrow & \searrow & \nearrow & \nearrow & \searrow & \nearrow & \nearrow & \searrow & \nearrow \\

\searrow & \nearrow & \searrow & \nearrow & \searrow & \nearrow & \nearrow & \searrow & \nearrow & \nearrow \\

\searrow & \searrow & \nearrow & \searrow & \nearrow & \nearrow & \searrow & \nearrow & \nearrow & \nearrow
\end{bmatrix}.
\end{displaymath}}
\label{fig 1}
\end{figure}

Now, we show how to relate the array $A$ to the construction of a numerical array, satisfying the requirements of Theorem~\ref{thm 2}. In the beginning, let $B$ be an empty array of size $(k+\ell)\times (d-1)^{\binom{k+\ell}{k}}$. We will fill in the entries of $B$ so that each row forms a permutation of the integers from 1 to $(d-1)^{\binom{k+\ell}{k}}$. For every $i$ between 0 and $\binom{k+\ell}{k}$ and for every $j\in [k+\ell]$, divide the $j-$th row of $B$ into $(d-1)^{\binom{k+\ell}{k}-i}$ groups of $(d-1)^i$ consecutive entries. For each group, we replace its entries in the $j-$th row of $B$ with the consecutive integers in one of the sets 
\begin{equation*}
[1, (d-1)^i], [(d-1)^i+1, 2(d-1)^i], \dots, [(d-1)^{\binom{k+\ell}{k}}-(d-1)^i+1, (d-1)^{\binom{k+\ell}{k}}].
\end{equation*}

Since for every $i\in [\binom{k+\ell}{k}]$ we require that the integers from any group of size $(d-1)^i$ in the $j-$th row of $B$ are consecutive, we may consider an order relation between the groups, defined by $g_1 < g_2$ if every entry in the group $g_1$ is smaller than every entry in the group $g_2$. Now, we will use the $j-$th row of the array $A$ to indicate the order of the integers in every group in the $j-$th row of $B$, and the order of the groups as well, as follows. For every row $j\in [k+\ell]$ and for every $i\in [\binom{k+\ell}{k}]$, if $A_{j,i} = \nearrow$, then for every group $g_i$ containing $(d-1)^i$ consecutive entries in the $j-$th row of $B$, we order the $d-1$ groups of $(d-1)^{i-1}$ consecutive entries, contained in $g_i$, in increasing order. If, on the other hand, $A_{j,i} = \searrow$, then for every group $g_i$ of $(d-1)^i$ entries in the $j-$th row of $B$, we order the $d-1$ groups of $(d-1)^{i-1}$ consecutive entries, contained in $g_i$, in decreasing order. For the sake of clarity, we give an example.\par 

For example, consider the case $k=2, \ell=1, d=4$. One possibility for the array $A$ is 
\begin{equation*}
    \begin{bmatrix}
    \nearrow & \nearrow & \searrow \\
    \nearrow & \searrow & \nearrow \\
    \searrow & \nearrow & \nearrow
    \end{bmatrix}.
\end{equation*}

Since $(d-1)^{\binom{k+\ell}{k}} = 3^3 = 27$, in the beginning we are given an empty array $B$ of size $3\times 27$. Let us see how to fill in the first row. We have $A_{1,3} = \searrow$, so this means that the first row in $B$ must begin with the group of $(d-1)^{\binom{k+\ell}{k}-1} = 3^2 = 9$ integers from 19 to 27 in some order, then continue with the group of nine integers from 10 to 18 in some order, and finish with the group of nine integers from 1 to 9 in some order. Then, $A_{1,2} = \nearrow$, so each of the three groups above will consist of three smaller groups, ordered in increasing order. For example, the integers in the first group will be 19, 20, 21 in some order, then we continue with 22, 23, 24 in some order, and we finish with 25, 26, 27 in some order. Also, the second group of nine integers will begin with 10, 11, 12 in some order, continue with 13, 14, 15 in some order, and then finish with 16, 17, 18 in some order. Finally, $A_{1,1} = \nearrow$, so the members of each of the nine groups of three consecutive integers will be ordered in an increasing order. Therefore, the first row of $B$ is given by
\begin{equation*}
19,20,21,22,23,24,25,26,27,10,11,12,13,14,15,16,17,18,1,2,3,4,5,6,7,8,9.
\end{equation*}

Let us analyse the second row. Since $A_{2,3}=\nearrow$, it will begin with the integers from 1 to 9 in some order, then it will continue with the integers from 10 to 18 in some order, and will finish with the integers from 19 to 27 in some order. Then, $A_{2,2}=\searrow$, so the first group of nine integers will begin with 7,8,9 in some order, then it will continue with 4,5,6 in some order, and will finish with 1,2,3 in some order. The two other groups of nine integers are organised similarly, that is, $16,17,18; 13,14,15; 10,11,12$ for the second one and $25,26,27; 22,23,24; 19,20,21$ for the third one. Finally, $A_{2,1}=\nearrow$, so each group of three consecutive integers is ordered in an increasing order. The second row of $B$ is thus given by
\begin{equation*}
7,8,9,4,5,6,1,2,3,16,17,18,13,14,15,10,11,12,25,26,27,22,23,24,19,20,21.
\end{equation*}
The third row of $B$ is filled in in a similar way:
\begin{equation*}
3,2,1,6,5,4,9,8,7,12,11,10,15,14,13,18,17,16,21,20,19,24,23,22,27,26,25.
\end{equation*}

We prove that the array $B$ does not have property $(k, \ell, d)$.

\begin{proof}[Proof of Theorem~\ref{thm 2}]
We argue by contradiction. Suppose that there exists a subarray $D$ of size $k\times d$, whose columns form an increasing sequence of $k-$dimensional vectors (the case of a subarray $D$ of size $\ell\times d$, whose columns form a decreasing sequence of $\ell-$dimensional vectors is analogous). Let $c_1, c_2,\dots, c_d$ be the indices of the columns of $B$, which contain the entries of $D$. 
\begin{claim}\label{sublem}
There exist two integers $i_1, i_2\in [\binom{k+\ell}{k}]$, for which, for both $i = i_1, i_2$, there is a group of size $(d-1)^i$, which contains at least two groups of size $(d-1)^{i-1}$, all of which contain at least one of the columns with indices $c_1, c_2, \dots, c_d$.
\end{claim}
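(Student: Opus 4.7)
The plan is to encode the nested partition of columns used to construct $B$ as a rooted tree $T$, and then argue at the level of this tree. The root of $T$ (at level $\binom{k+\ell}{k}$) stands for the full set of columns; a node at level $i\ge 1$ corresponds to a group of $(d-1)^i$ consecutive columns and has exactly $d-1$ children at level $i-1$, one for each subgroup of size $(d-1)^{i-1}$; the leaves (at level $0$) are individual columns. The columns $c_1,\dots,c_d$ appearing in the hypothetical subarray $D$ mark $d$ of the leaves. A node at level $i\ge 1$ is, by definition, \emph{splitting} exactly when it satisfies the condition in the claim, i.e.\ at least two of its $d-1$ child-subtrees meet $\{c_1,\dots,c_d\}$. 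The goal is thus to exhibit splitting nodes at two distinct levels $i_1,i_2\in[\binom{k+\ell}{k}]$.

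A short counting argument already gives two splitting nodes, possibly at the same level. Call a node \emph{active} if its subtree meets $\{c_1,\dots,c_d\}$; for an active internal node $v$ let $f(v)$ be its number of active children. The restriction $T'$ of $T$ to the active nodes is a subtree whose leaves are precisely the $d$ marked columns, so if $N$ denotes the number of active internal nodes, $T'$ has $d+N$ vertices and $d+N-1$ edges, which yields
\begin{equation*}
\sum_v f(v)=d+N-1,\qquad\text{hence}\qquad \sum_v(f(v)-1)=d-1,
\end{equation*}
both sums running over active internal nodes. Non-splitting active internal nodes contribute $0$ because they have $f(v)=1$, while every splitting node contributes at most $f(v)-1\le(d-1)-1=d-2$ since $T$ is $(d-1)$-ary. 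A single splitting node could therefore account for at most $d-2<d-1$, so at least two splitting nodes must exist.

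The second step upgrades this to two distinct splitting levels. Let $v^*$ be a splitting node of maximal level $i^*$ and pick any other splitting node $v'$, which exists by the previous step. If $v'$ were not a descendant of $v^*$, then the lowest common ancestor $u$ of $v^*$ and $v'$ would be a strict ancestor of both, and would have marked leaves in two distinct child-subtrees---those containing $v^*$ and $v'$ respectively---so $u$ would itself be a splitting node at a level strictly higher than $i^*$, contradicting the maximality of $i^*$. Hence $v'$ lies strictly below $v^*$, and the pair $(i^*,\,\text{level}(v'))$ supplies the required $i_1\neq i_2$.

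I do not anticipate any serious obstacle. The main thing to be careful about is to apply the edge-counting identity on the \emph{active} subtree $T'$ rather than on $T$ itself, and to notice that the inequality $f(v)\le d-1$---built into the $(d-1)$-ary branching of $T$---is exactly what precludes the single-splitting-node scenario in which one node would have to absorb the full deficit $d-1$.
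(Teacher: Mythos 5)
Your proof is correct, but it establishes the two splitting levels by a different mechanism than the paper. The paper works top-down and locally: it takes $i_1$ to be the largest level at which the marked columns occupy at least two groups of size $(d-1)^{i_1-1}$ (so the group of size $(d-1)^{i_1}$ containing all of $c_1,\dots,c_d$ is exactly your splitting node of maximal level), and then applies the pigeonhole principle --- $d$ columns distributed over at most $d-1$ subgroups --- to find one subgroup holding two marked columns, whose lowest common ancestor is a second splitting node at a strictly smaller level $i_2$. You instead first prove the existence of two splitting nodes globally, via the edge-count identity $\sum_v(f(v)-1)=d-1$ on the active subtree combined with the branching bound $f(v)\le d-1$, and only afterwards separate their levels with the maximal-level/LCA argument. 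The combinatorial core is the same in both cases (a $(d-1)$-ary node cannot by itself isolate all $d$ columns), but your identity quantifies the total amount of branching and would, for example, yield lower bounds on the number of splitting nodes for other arities or larger marked sets, whereas the paper's pigeonhole step is shorter and dispenses with the tree formalism. One small point to tighten: when you assume $v'$ is not a descendant of $v^*$ and assert that their lowest common ancestor $u$ is ``a strict ancestor of both,'' you are implicitly excluding the case where $v'$ is itself a strict ancestor of $v^*$ (in which case $u=v'$ and the sentence as written fails); that case is indeed impossible, but only because it would contradict the maximality of $i^*$, so this should be said explicitly.
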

\begin{proof}
Let $i_1$ be the largest $i$ between $1$ and $\binom{k+\ell}{k}$, for which the set $c_1, c_2, \dots, c_d$ has elements in at least two different groups of size $(d-1)^{i_1-1}$. Note that $i_1$ is at least two since a group of size $d-1$ cannot contain all $d$ indices $c_1, \dots, c_d$. Moreover, by the pigeonhole principle, there is a group of size $(d-1)^{i_1-1}$, which contains at least two indices among $c_1,\dots, c_d$. For these two indices, there exists an integer $i_2 < i_1$, for which both of them are contained in a group of size $(d-1)^{i_2}$, but at the same time they are contained in different groups of size $(d-1)^{i_2-1}$ (note that the smallest groups are of size one). This proves the claim.
\end{proof}

Now, let $i_1$ and $i_2$ be the integers given by Claim~\ref{sublem}. Then, one must have that in each of the $k$ rows of $B$, which contain entries of $D$, the order of the groups of size $(d-1)^{i_1-1}$ within any group of size $(d-1)^{i_1}$ must be increasing, and the order of the groups of size $(d-1)^{i_2-1}$ within any group of size $(d-1)^{i_2}$ must also be increasing. This means that in the array $A$ must contain a subarray of size $k\times 2$, full of $\nearrow$. However, this is not possible since the columns of the array $A$ correspond to walks from $(0,0)$ to $(\frac{k+\ell}{\sqrt{2}}, \frac{k-\ell}{\sqrt{2}})$ with $k$ steps of the type $\nearrow$ and $\ell$ steps of the type $\searrow$, and the positions of the steps of the type $\nearrow$ determine a unique such walk. This is a contradiction, which proves that the array $B$ has property $(k, \ell, d)$.
\end{proof}

\section{Discussion}

A natural question to ask in light of our results up to now is: for $t\ge 1$, which is the largest integer $N = N(t)$, for which there exists a numerical array of size $(k+\ell+t)\times N$, which does not have property $(k, \ell, d)$? The idea from the proof of Theorem~\ref{thm 2} gives a clue for a lower bound. One could try to find a maximal family of paths with steps within $\{\nearrow, \searrow\}^{k+\ell+t}$, for which, at any $k$ positions among the $k+\ell+t$, at most one path of the family has only steps of the type $\nearrow$, and at any $\ell$ positions among the $k+\ell+t$, at most one path of the family has only steps of the type $\searrow$. The question might be reformulated as follows:

\begin{question}
What is the largest possible $N = N(k,\ell, t)\in \mathbb N$, for which there exists a family of $N$ binary vectors of length $k+\ell+t$ such that no two vectors in the family share $k$ entries equal to 1 ($\nearrow$), and no two vectors in the family share $\ell$ entries equal to 0 ($\searrow$)?
\end{question}

Arguing as in the proof of Theorem~\ref{thm 2}, we conclude the following fact:
\begin{fact}\label{fact 2}
For every $k, \ell, t\ge 1, d\ge 2$, there is an array of size $(k+\ell+t)\times (d-1)^{N(k, \ell, t)}$, which does not have property $(k, \ell, d)$.
\end{fact}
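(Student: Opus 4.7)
The plan is to mimic the construction from the proof of Theorem~\ref{thm 2}, replacing the family of $\binom{k+\ell}{k}$ up-right walks with a maximal family of binary vectors as described in the preceding question. Set $N := N(k,\ell,t)$ and let $\mathcal F = \{w^1, w^2, \ldots, w^N\}$ be a family of $N$ binary vectors of length $k+\ell+t$ such that no two vectors share $k$ coordinates equal to $1$ and no two share $\ell$ coordinates equal to $0$. Encode $\mathcal F$ as an array $A$ of size $(k+\ell+t) \times N$ with entries in $\{\nearrow, \searrow\}$, whose $j$-th column is $w^j$ translated via $1 \mapsto \nearrow$ and $0 \mapsto \searrow$.

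Next, I would build an array $B$ of size $(k+\ell+t) \times (d-1)^N$ by the exact same recursive group-filling procedure as in the proof of Theorem~\ref{thm 2}: for every $j \in [k+\ell+t]$ and every $i \in [N]$, partition the $j$-th row of $B$ into $(d-1)^{N-i}$ groups of $(d-1)^i$ consecutive integers, and arrange the $d-1$ sub-groups of size $(d-1)^{i-1}$ inside each such group in increasing (respectively, decreasing) order if $A_{j,i} = \nearrow$ (respectively, $A_{j,i} = \searrow$). As in Theorem~\ref{thm 2}, each row of $B$ then becomes a permutation of $\{1,2,\dots,(d-1)^N\}$.

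To verify that $B$ fails property $(k,\ell,d)$, suppose for contradiction that $B$ contains a subarray $D$ of size $k \times d$ whose columns form an increasing sequence of vectors in $\mathbb R^k$. The statement and proof of Claim~\ref{sublem} depend only on the number of columns of $B$ and the recursive group structure (never on the identity of $A$), so they apply verbatim and yield indices $i_1, i_2 \in [N]$ such that, for both $i \in \{i_1, i_2\}$, the column indices of $D$ meet at least two sub-groups of size $(d-1)^{i-1}$ lying in a common group of size $(d-1)^i$. Arguing exactly as in the proof of Theorem~\ref{thm 2}, this forces $A_{j,i_1} = A_{j,i_2} = \nearrow$ in each of the $k$ rows of $B$ containing the entries of $D$. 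Consequently, the two binary vectors $w^{i_1}$ and $w^{i_2}$ share at least $k$ coordinates equal to $1$, which contradicts the defining property of $\mathcal F$. The symmetric argument, with $\searrow$ replacing $\nearrow$ and $\ell$ replacing $k$, handles the case of a decreasing subarray of size $\ell \times d$.

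The only substantive point where the argument diverges from that of Theorem~\ref{thm 2} is the very last step: in Theorem~\ref{thm 2} one concluded a contradiction by invoking the uniqueness of a walk from its $k$ up-positions, whereas here one invokes the (formally weaker) non-sharing condition built into the definition of $N(k,\ell,t)$. Since this condition is tailor-made for precisely this kind of conclusion, I do not anticipate a real obstacle; the work is essentially bookkeeping, and the interest of Fact~\ref{fact 2} lies in exposing the coding-theoretic quantity $N(k,\ell,t)$ rather than in any additional combinatorial subtlety.
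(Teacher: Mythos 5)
Your proposal is correct and follows exactly the route the paper intends: the paper's justification of Fact~\ref{fact 2} is precisely ``arguing as in the proof of Theorem~\ref{thm 2}'', and your write-up fleshes this out faithfully, including the key observation that Claim~\ref{sublem} depends only on the group structure of $B$ and that the final contradiction now comes from the non-sharing condition defining $N(k,\ell,t)$ (applied to the distinct indices $i_1\neq i_2$) rather than from the uniqueness of a walk. No gaps.
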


Sadly, we cannot compute $N(k,\ell,t)$ in general. In the case when the number of $1-$s (or $\nearrow-$s) is fixed for all the vectors in the family considered in the question above (say, to $k+t_1$ with $t_1$ between 0 and $t$, and let $t_2=t-t_1$), the problem comes down to looking for a family of paths between $(0,0)$ and $(\frac{k+\ell+t}{\sqrt{2}}, \frac{k+t_1-\ell-t_2}{\sqrt{2}})$ using only steps in the set $\{\nearrow, \searrow\}$, just like in the proof of Theorem~\ref{thm 2}. In the language of coding theory, we are looking for the largest constant-weight code of length $k+\ell+t$, distance $2\max(1+t_1, 1+t_2)$ and weight $k+t_1$. It is a famous and vastly studied problem in coding theory to compute this number, often denoted by $A(k+\ell+t, 2\max(1+t_1, 1+t_2), k+t_1)$, see \cite{Bro, MS, SHP} for a number of lower and upper bounds on the function $A(\cdot, \cdot, \cdot)$ (not to be confused with the array $A$ constructed above). Now, let 
\begin{equation*}
    M(k, \ell, t) = \max_{0\le t_1\le t} A(k+\ell+t, 2\max(1+t_1, 1+t_2), k+t_1).
\end{equation*}
Since $A(k+\ell+t, 2\max(1+t_1, 1+t_2), k+t_1)\le N(k, \ell, t)$ for every choice of $t_1$ between 0 and $t$, we conclude that $M(k, \ell, t)\le N(k, \ell, t)$ as well. As a consequence we obtain following corollary. We believe that it may be used in practice in the construction of large numerical arrays satisfying property $(k, \ell, d)$ for some small values of the parameter:
\begin{corollary}
For every $k, \ell, t\ge 1$, the construction from the proof of Theorem~\ref{thm 2} yields an array of size $(k+\ell+t)\times (d-1)^{M(k, \ell, t)}$, which does not have property $(k, \ell, d)$.
\end{corollary}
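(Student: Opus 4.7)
The plan is to adapt the construction in the proof of Theorem~\ref{thm 2}, replacing the list of all $\binom{k+\ell}{k}$ walks by a carefully chosen family of $M(k,\ell,t)$ walks coming from an optimal constant-weight code. The key observation driving the argument is that the minimum-distance condition defining the code translates precisely into the pairwise combinatorial property that no two walks share $k$ steps of type $\nearrow$ nor $\ell$ steps of type $\searrow$, which is exactly what is needed for the argument of Theorem~\ref{thm 2} to go through.

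More precisely, I would first fix some $t_1\in\{0,1,\dots,t\}$ achieving the maximum in the definition of $M(k,\ell,t)$, set $t_2=t-t_1$, and let $\mathcal{C}$ be an optimal constant-weight code of length $k+\ell+t$, distance $2\max(1+t_1,1+t_2)$, and weight $k+t_1$, so that $|\mathcal{C}|=M(k,\ell,t)$. To each codeword $c\in\mathcal{C}$ I would associate the walk in the $-45^\circ$-rotated grid whose $i$-th step is $\nearrow$ if $c_i=1$ and $\searrow$ if $c_i=0$; since every codeword has weight $k+t_1$, all associated walks share a common endpoint $\left(\tfrac{k+\ell+t}{\sqrt{2}}, \tfrac{k+t_1-\ell-t_2}{\sqrt{2}}\right)$. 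For any two codewords $c,c'\in\mathcal{C}$ sharing $s$ common one-entries, one has $d(c,c')=2(k+t_1-s)\ge 2(1+t_1)$, hence $s\le k-1$; an analogous count for zero-entries yields that $c$ and $c'$ share at most $\ell-1$ common zero-entries. This is precisely the hypothesis described in the paragraph preceding Fact~\ref{fact 2}.

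Finally, I would apply the construction from the proof of Theorem~\ref{thm 2} to this family, forming an array $A$ of size $(k+\ell+t)\times M(k,\ell,t)$ whose columns encode the selected walks, and then the corresponding numerical array $B$ of size $(k+\ell+t)\times (d-1)^{M(k,\ell,t)}$ by the same grouping and ordering rule as before. The proof that $B$ fails to have property $(k,\ell,d)$ then goes through verbatim: Claim~\ref{sublem} still yields that any increasing $k\times d$ subarray (respectively decreasing $\ell\times d$ subarray) in $B$ would force two columns of $A$ to share $k$ positions of $\nearrow$ (respectively $\ell$ positions of $\searrow$), contradicting the pairwise property established in the previous paragraph. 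The only step worth highlighting as an obstacle is keeping the translation between Hamming distance, weight, and shared coordinate positions bookkeeping-clean, which is routine.
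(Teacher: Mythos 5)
Your proposal is correct and follows exactly the route the paper intends: it makes explicit the translation (left implicit in the paper via Fact~\ref{fact 2} and the inequality $M(k,\ell,t)\le N(k,\ell,t)$) from the minimum distance $2\max(1+t_1,1+t_2)$ of a weight-$(k+t_1)$ code to the property that no two codewords share $k$ ones or $\ell$ zeros, and then reruns the Theorem~\ref{thm 2} argument on the resulting family of walks. The distance bookkeeping ($s\le k-1$ and $s'\le \ell-1$) is verified correctly, so nothing further is needed.
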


Reasonable upper bounds in this more general case may also be of interest since our approach for Theorem~\ref{thm 1} breaks down in case more than $k+\ell-1$ rows are present.

\section{Acknowledgements}
The author would like to thank Emil Kolev for useful remarks on the discussion section, Violeta Naydenova for a meticulous proofreading, and Peter Boyvalenkov, Ivailo Hartarsky, and the anonymous referee for several corrections.

\bibliographystyle{plain}
\bibliography{References}

\end{document}